\newtheorem{thm}{Theorem}[section]
\newtheorem{cor}[thm]{Corollary}
\newtheorem{prop}[thm]{Proposition}
\newtheorem{lem}[thm]{Lemma}
\theoremstyle{definition}
\newtheorem{defn}[thm]{Definition}
\newtheorem{exmp}[thm]{Example}
\theoremstyle{remark}
\newtheorem{rem}[thm]{Remark}
\numberwithin{equation}{section}
\newcommand{\beq}{\begin{equation*}\begin{aligned}}
\newcommand{\eeq}{\end{aligned}\end{equation*}}
\newcommand{\bpf}{\begin{proof}}
\newcommand{\epf}{\end{proof}}
\newcommand{\bthm}{\begin{thm}}
\newcommand{\ethm}{\end{thm}}
\newcommand{\bprop}{\begin{prop}}
\newcommand{\eprop}{\end{prop}}
\newcommand{\bcor}{\begin{cor}}
\newcommand{\ecor}{\end{cor}}
\newcommand{\blem}{\begin{lem}}
\newcommand{\elem}{\end{lem}}
\newcommand{\bdefn}{\begin{defn}}
\newcommand{\edefn}{\end{defn}}
\newcommand{\bexmp}{\begin{exmp}}
\newcommand{\eexmp}{\end{exmp}}
\newcommand{\brem}{\begin{rem}}
\newcommand{\erem}{\end{rem}}
\newcommand{\benu}{\begin{enumerate}[(1)]}
\newcommand{\eenu}{\end{enumerate}}
\newcommand{\bdia}{\begin{displaymath}\xymatrix}
\newcommand{\edia}{\end{displaymath}}
\newcommand{\al}{\alpha}
\newcommand{\be}{\beta}
\newcommand{\ga}{\gamma}
\newcommand{\intg}{\mathbb{Z}}
\newcommand{\real}{\mathbb{R}}
\newcommand{\ra}{\rightarrow}
\begin{document}

\title{Some computations on instanton knot homology}


\author{Zhenkun Li}
\address{Department of Mathematics, Stanford University}
\curraddr{}
\email{zhenkun@stanford.edu}
\thanks{}

\author{Yi Liang}
\address{Massachusetts Institute of Technology}
\curraddr{}
\email{liangy@mit.edu}
\thanks{}

\keywords{}
\date{}
\dedicatory{}
\maketitle
\begin{abstract}
In a recent paper, the first author and his collaborator developed a method to compute an upper bound of the dimension of instanton Floer homology via Heegaard diagrams of $3$-manifolds. In this paper, for a knot inside $S^3$, we further introduce an algorithm that computes an upper bound of the dimension of instanton knot homology from knot diagrams. We test the algorithm with all knots up to $7$ crossings as well as a more complicated knot $10_{153}$. In the second half of the paper, we show that if the instanton knot Floer homology of a knot has a specified form, then the knot must be an instanton L-space knot.
\end{abstract}


\section{Introduction}
Knot theory is a central topic in low dimensional topology. In 1990, Floer introduced a knot invariant called the instanton knot homology in \cite{floer1990knot}. It has become a powerful tool in the study of knot theory. For example, the instanton knot homology detects the genus and fibredness of a knot, recovers the Alexander polynomial, and plays an important role in the establishment of the milestone result that Khovanov homology detects the unknot. See Kronheimer and Mrowka \cite{kronheimer2010knots,kronheimer2010instanton,kronheimer2011knot,kronheimer2011khovanov}.

The instanton knot homology of a knot $K\subset S^3$ is a finite-dimensional complex vector space associated to $K$. It is constructed by studying the solutions of sets of partial differential equations over a closed oriented $3$-manifold $Y$ and the infinite cylinder $\real\times Y$, where $Y$ is obtained from the knot complement by attaching some standard piece of $3$-manifold that also has a toroidal boundary. This nature of instanton knot homology makes it very difficult to compute. The breakthrough in computation was made by Kronheimer and Mrowka. Combining their work that the Euler characteristic of instanton knot homology recovers the Alexander polynomial of the knot, and that there exists a spectral sequence whose $E_2$ page is the Khovanov homology and whose $E_{\infty}$ page is the instanton knot homology. Thus one obtains a lower bound of the dimension of the instanton knot homology via the Alexander polynomial and an upper bound via the Khovanov homology. When these two bounds coincide, for example, for all alternating knots, the computation is done. 

Later, several groups of people studied the representation varieties of some special families of knots to write down an explicit set of generators of the chain complex of the instanton knot homology and thus obtained some better upper bounds than the one coming from Khovanov homology. See \cite{Hedden2014,daemi2019equivariant,Lobb2020}.

Heegaard diagram is an effective combinatorial way to describe $3$-manifolds and knots. In fact, any knot can be described and is determined by its Heegaard diagrams. Since instanton knot homology serves as a knot invariant, it is a priori determined by the Heegaard diagram of the knot. However, its construction through partial differential equations makes its relation to Heegaard diagrams very implicit. To study this relation, recently, the first author and his collaborator established the following result in \cite{li2020heegaard}. 
\bthm\label{thm: key inequality}
Suppose $K\subset S^3$ is a knot, and $(\Sigma,\al,\be)$ is a Heegaard diagram of $K$. Let $H$ be a handle body with $\partial H=\Sigma$ and $\ga\subset H$ is a (disconnected) oriented simple closed curve on $H$ so that the following is true.
\begin{enumerate}
\item We have that $\ga$ has $(g(\Sigma)+1)$ components.
\item We have that $\Sigma\backslash\ga$ consists of two components of equal Euler characteristics.
\item We have that all $\be$-curves are components of $\ga$.
\end{enumerate}
Then we have the following inequality.
$$\dim_{\mathbb{C}}KHI(K)\leq\dim_{\mathbb{C}}SHI(H,\ga).$$
Here, $KHI$ is the instanton knot homology of $K$, and $SHI$ is the sutured instanton Floer homology of the balanced sutured manifold $(H,\ga)$.
\ethm

Sutured instanton Floer homology associates a finite-dimensional complex vector space to every balanced sutured manifold. It was introduced by Kronheimer and Mrowka in \cite{kronheimer2010knots}. The computation of $SHI(M,\ga)$ for a general balanced sutured manifold $(M,\ga)$ is also difficult, due to the same reason as $KHI$. However, in the special case when $M=H$ is a handle body, the first author and his collaborator developed an algorithm to compute an upper bound of $SHI(H,\ga)$ in \cite{li2019decomposition}. So, equipped with Theorem \ref{thm: key inequality}, one can obtain an upper bound on the dimension of the instanton knot homology of a knot from any Heegaard diagram of that knot. Following this idea, Li and Ye were able to compute a conjecturally sharp upper bound for all $(1,1)$-knots in \cite{li2020heegaard}.

In this paper, we utilize Theorem \ref{thm: key inequality} further and obtain the following.

\bthm\label{thm: algorithm}
Suppose $K\subset S^3$ is a knot. Let $D$ be any knot diagram of $K$. Then there is an algorithm to compute an upper bound of the dimension of $KHI(S^3,K)$ out of $D$. 
\ethm

\brem
The project in the current paper was launched right after the completion of \cite{li2020heegaard}, where the first author and his collaborator first introduced Theorem \ref{thm: key inequality}. Later, Theorem \ref{thm: key inequality} was further used to prove the main result of \cite{BLY2020}. By \cite[Theorem 1.1]{BLY2020}, one knows that the dimension of $KHI(S^3,K)$ is bounded by the number of generators of $\widehat{CFK}(S^3,K)$, where $\widehat{CFK}$ is the chain complex of the hat version of knot Floer homology introduced by Ozsv\'ath and Szab\'o \cite{ozsvath2004holomorphicknot}. Since the number of generators of $\widehat{CFK}(S^3,K)$ can also be computed directly from any given diagram of the knot $K$, \cite[Theorem 1.1]{BLY2020} gives rise to an algorithm that is different from the one in Theorem \ref{thm: algorithm}. Though we haven't compared the effectiveness between these two algorithms.
\erem

We further testify the algorithm in Theroem \ref{thm: algorithm} with knots of crossing numbers at most $7$. We found that all the bounds from the algorithm are sharp. We also test a more complicated knot $10_{153}$. See Table \ref{tab: knots} for more details.

It is worth mentioning that all knots of crossing number at most $7$ are alternating. So the dimensions of $KHI$ for them have been known due to the work of Kronheimer and Mrowka \cite{kronheimer2010instanton,kronheimer2011khovanov} as discussed above. However, the upper bounds coming from Khovanov homology depends on the establishment of a spectral sequence that relates instanton knot homology with Khovanonv homology. So Theorem \ref{thm: algorithm} provides an alternative proof that is independent of Kronheimer and Mrowka's spectral sequence.

Instanton L-space knots are those knots inside $S^3$ that admits a Dehn surgery whose instanton Floer homology has minimal dimension. As mentioned above, the computation of instanton Floer homology is a big open problem, so in general, it is hard to identify instanton L-space knots. In this paper, we give a sufficient condition in terms of the instanton knot homology.

\begin{thm}\label{thm : main theorem}
Suppose $K\subset S^3$ is a knot of genus $g$. Suppose further that
\begin{equation}\label{eq: intro}
    KHI(K, i)\cong \left\{
                \begin{array}{ll}
                  \mathbb{C}, & \mid i \mid = g, g-1, 0\\ 
                   0, & Otherwise\\
                \end{array}
              \right.
\end{equation}
Then $K$ admits an instanton L-space surgery.
\end{thm}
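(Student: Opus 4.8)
The plan is to realize an instanton L-space surgery as a large positive surgery $S^3_n(K)$ and to detect it through a surgery formula. Recall from the discussion above that $K$ is an instanton L-space knot precisely when some surgery $S^3_n(K)$ has framed instanton homology $I^{\#}$ of minimal dimension; since $|H_1(S^3_n(K);\mathbb{Z})| = n$, this minimal value is $n$, and in general $\dim_{\mathbb{C}} I^{\#}(S^3_n(K)) \geq n$. Thus it suffices to produce a single $n$ with $\dim_{\mathbb{C}} I^{\#}(S^3_n(K)) = n$. I would invoke the instanton large-surgery formula: for $n \geq 2g-1$ there is an identification $\dim_{\mathbb{C}} I^{\#}(S^3_n(K)) = \sum_{i} \dim_{\mathbb{C}} H_*(A_i)$, a sum of $n$ terms indexed by the torsion summands, where each $A_i$ is the instanton analogue of the large-surgery ``bent'' complex built from the minus theory $KHI^{-}(K)$. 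Each term satisfies $\dim_{\mathbb{C}} H_*(A_i) \geq 1$, with equality automatic once $|i| \geq g$, so everything reduces to showing $\dim_{\mathbb{C}} H_*(A_i) = 1$ for $|i| < g$, which I obtain by pinning down the chain homotopy type of $KHI^{-}(K)$.

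To pin it down, first note that the graded Euler characteristic of $KHI(K)$ is the Alexander polynomial (Kronheimer--Mrowka), so (\ref{eq: intro}) forces $\Delta_K(t) = t^g - t^{g-1} + 1 - t^{-(g-1)} + t^{-g}$, with the five generators occupying the distinct Alexander gradings $g, g-1, 0, -(g-1), -g$, each one-dimensional. The central observation is that this rank-one-per-grading condition rigidifies the complex. In the Heegaard Floer model, a bigraded knot complex decomposes into a single ``staircase'' carrying the rank-one total homology together with ``box'' summands; but every box places two of its generators in a common Alexander grading, which is impossible here, since every occupied grading is one-dimensional and the five gradings are distinct. Hence there are no boxes, the complex is a single staircase, and the symmetry together with the gradings $g, g-1, 0, -(g-1), -g$ forces the step lengths to be $1, g-1, g-1, 1$. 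I would then transport this reasoning to the instanton setting, concluding that $KHI^{-}(K)$ is the corresponding staircase module; feeding this into the $A_i$ gives $\dim_{\mathbb{C}} H_*(A_i) = 1$ for every $i$ by direct inspection, completing the reduction. The degenerate case $g = 1$, where the pattern collapses to one-dimensional groups in gradings $1, 0, -1$ (the trefoil pattern), is handled in the same way.

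The principal obstacle is the transport in the previous step: instanton knot homology does not yet carry a fully developed $CFK^{\infty}$-type bicomplex whose total homology is that of the ground ring, so the clean ``no boxes, hence staircase'' dichotomy is not immediately available. The delicate work is to show, using only the structures that are available --- the minus module $KHI^{-}(K)$ with its $U$-action, its specialization recovering $KHI(K)$, and the symmetry exchanging positive and negative Alexander gradings --- that the hypothesis determines not merely the ranks but the entire module up to isomorphism, and in particular that both the $U$-action and the internal differentials are the staircase ones. I expect the spread of the gradings (the gaps of size $g-1$ between $g-1$ and $0$, and between $0$ and $-(g-1)$) to be exactly what rules out every alternative grading-compatible structure, so that careful bookkeeping of the Alexander and $\mathbb{Z}/2$-gradings closes the argument; once the staircase module is identified, the surgery formula delivers the L-space surgery with no further difficulty.
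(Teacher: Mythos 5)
Your proposal takes a genuinely different route from the paper, but it contains a gap that you yourself flag and do not close, and that gap is fatal as the argument stands. The entire reduction runs through a large-surgery formula $\dim_{\mathbb{C}} I^{\sharp}(S^3_n(K)) = \sum_i \dim_{\mathbb{C}} H_*(A_i)$ with bent complexes $A_i$ built from a minus-type module $KHI^{-}(K)$ with a $U$-action, followed by a ``no boxes, hence staircase'' structure theorem. Neither ingredient is available in the instanton setting in the form you need: there is no established $CFK^{\infty}$-type filtered chain homotopy invariant whose total homology is the ground ring, which is exactly the input that forces rank-one-per-grading complexes to be staircases in the Heegaard Floer world, and there is no proved instanton large-surgery formula expressing $I^{\sharp}$ of a surgery as a sum of homologies of bent subquotient complexes. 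Without these, knowing the ranks of $KHI(K,i)$ does not determine any differentials or $U$-action, so the step ``feeding this into the $A_i$ gives $\dim H_*(A_i)=1$'' has no content. Your closing paragraph correctly identifies this as the principal obstacle, but identifying an obstacle is not the same as overcoming it.

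The paper avoids this machinery entirely. It works with sutured instanton homology of the knot complement equipped with various sutures $\gamma_{(1,-2g)}$, $\gamma_{(1,-2g-1)}$, $\gamma_{(2,-4g-1)}$, and uses the graded bypass exact triangles (Theorem \ref{thm: graded bypass}) to propagate the hypothesis on $KHI(K,i)$ grading by grading. The role that the internal differentials of a staircase would play in your approach is played here by a genuinely topological input: since $KHI(K,g)\cong\mathbb{C}$ the knot is fibred, and after mirroring one may assume it is not right veering, so the Baldwin--Sivek vanishing theorem (Theorem \ref{thm: vanishing psi}) kills the bypass map $\psi_{+,2g-1}$ and turns an exact triangle from an inequality into an exact computation of $SHI(S^3(K),\gamma_{(1,-2g-1)},2g-1)=0$. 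An induction then determines all graded pieces of $SHI(S^3(K),\gamma_{(2,-4g-1)})$ to be $\mathbb{C}$, and the identification $I^{\sharp}(S^3_{-2g-1}(K))\cong\bigoplus_{i=-g}^{g}SHI(S^3(K),\gamma_{(2,-4g-1)},i)$ from Theorem \ref{thm: properties of SHI} delivers the L-space surgery at slope $-2g-1$, not at a large positive slope as you assumed. If you want to salvage your approach, you would first have to prove the instanton analogues of the large-surgery formula and the staircase classification, each of which is a substantial project in its own right; note also that your proposal makes no use of the right-veering/fibredness dichotomy, which is the paper's substitute for exactly the rigidity you are missing.
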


Combined with the results from \cite{lidman2020framed,baldwin2020concordance}, we conclude the following.
\bcor\label{cor: intro}
Suppose $K\subset S^3$ is a knot of genus $g$ whose instanton knot homology is described as in (\ref{eq: intro}), then one and exactly one of the following two statements is true.

(1). $S_{2g-1}(K)$ is an instanton L-space, and for any rational number $r=\frac{p}{q}$  with $q\geq 1$, we have
\begin{equation}
{\rm dim}_{\mathbb{C}}I^{\sharp}(S^3_r(K))=\left\{
\begin{array}{cc}
    p & {\rm if~}r\geq 2g-1 \\
    (4g-2)\cdot q-p & {\rm otherwise}
\end{array}
\right.
\end{equation}

(2). $S_{1-2g}(K)$ is an instanton L-space, and for any rational number $r=\frac{p}{q}$ with $q\geq 1$, we have
\begin{equation}
{\rm dim}_{\mathbb{C}}I^{\sharp}(S^3_r(K))=\left\{
\begin{array}{cc}
    -p & {\rm if~}r\leq 1-2g \\
    (4g-2)\cdot q+p & {\rm otherwise}
\end{array}
\right.
\end{equation}
\ecor

\section{Preliminaries}\label{sec: preliminaries}
\bdefn[\cite{juhasz2006holomorphic,kronheimer2010knots}]\label{defn_2: balanced sutured manifold}
A \textbf{balanced sutured manifold} $(M,\ga)$ consists of a compact oriented 3-manifold $M$ with non-empty boundary together with a closed 1-submanifold $\ga$ on $\partial{M}$. Let $A(\ga)=[-1,1]\times\ga$ be an annular neighborhood of $\ga\subset \partial{M}$ and let $R(\ga)=\partial{M}\backslash{\rm int}(A(\ga))$. They satisfy the following properties. 
\begin{enumerate}
    \item Neither $M$ nor $R(\ga)$ has a closed component.
    \item If $\partial{A(\ga)}=-\partial{R(\ga)}$ is oriented in the same way as $\ga$, then we require this orientation of $\partial{R(\ga)}$ induces one on $R(\ga)$. The induced orientation on ${R(\ga)}$ is called the \textbf{canonical orientation}.
    \item Let $R_+(\ga)$ be the part of $R(\ga)$ so that the canonical orientation coincides with the induced orientation on $\partial{M}$, and let $R_-(\ga)=R(\ga)\backslash R_+(\ga)$. We require that $\chi(R_+(\ga))=\chi(R_-(\ga))$. If $\ga$ is clear in the contents, we simply write $R_\pm=R_\pm(\ga)$, respectively.
\end{enumerate}
\edefn

\bthm[Kronheimer and Mrowka \cite{kronheimer2010knots}]
For any balanced sutured manifold $(M,\ga)$, we can associate a finite-dimensional complex vector space, which we denote by $SHI(M,\ga)$, to $(M,\ga)$. It serves as a topological invariant of the pair $(M,\ga)$.
\ethm

\bdefn[Kronheimer and Mrowka \cite{kronheimer2010knots}]
For knot $K\subset S^3$, define its {\bf instanton knot homology}, which we denote by $KHI(K)$, to be
$$KHI(K)=SHI(S^3(K),\ga_{\mu}),$$
where $\ga_{\mu}$ consists of two meridians of $K$.

For a connected closed oriented $3$-manifold $Y$, define its {\bf framed instanton Floer homology}, which is denoted by $I^{\sharp}(Y)$, to be
$$I^{\sharp}(Y)=SHI(Y(1),\delta),$$
where $Y(1)=Y\backslash D^3$ is obtained from $Y$ by removing a $3$-ball and $\delta\subset\partial Y(1)$ is a connected simple closed curve.
\edefn

\bdefn
A knot $K\subset S^3$ is called an {\bf instanton L-space knot} if there exists a non-zero integer $n$ so that
$${\rm dim}_{\mathbb{C}}I^{\sharp}(S^3_n(K))=|n|.$$
Here $S^3_n(K)$ is the three manifold obtained from $S^3$ by an $n$ surgery along the knot $K$.
\edefn

In \cite{kronheimer2010knots,kronheimer2010instanton}, Kronheimer and Mrowka studied many basic properties of $KHI$, which we summarize as in the following two theorems.

\begin{thm}\label{thm: properties of KHI}
Suppose $K\subset S^3$ is a knot of genus $g$, then the following is true.  
\begin{enumerate}
	\item There is a $\intg$-grading on $KHI(K)$, which is called the {\bf Alexander grading}:
$$KHI(K) = \bigoplus _{i \in \mathbb{Z}} KHI(K, i).$$
\item For any $i \in \intg$ with $\mid i \mid >g$, we have $KHI(K,i)=0$.
\item We have $KHI(K,g)\neq 0$.
\item For any $i\in\intg$, we have $KHI(K, i) \cong KHI(k, -i)$.
\item The knot $K$ is fibred if and only if $KHI(K,g)\cong\mathbb{C}$.
\end{enumerate}
\end{thm}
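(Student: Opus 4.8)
The plan is to derive all five statements from the definition $KHI(K) = SHI(S^3(K),\ga_{\mu})$ together with the eigenvalue and sutured-decomposition machinery for sutured instanton Floer homology developed by Kronheimer and Mrowka in \cite{kronheimer2010knots,kronheimer2010instanton}. The common thread is a minimal-genus Seifert surface $S$ for $K$: properly embedded in the complement $S^3(K)$, it caps off to a closed surface $\bar S$ inside a closure $Y$ of the balanced sutured manifold $(S^3(K),\ga_{\mu})$, and the two commuting operators $\mu(\bar S)$ and $\mu(\mathrm{pt})$ acting on the instanton homology of $Y$ control the entire grading picture.

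For (1), I would first restrict to the generalized eigenspace of $\mu(\mathrm{pt})$ with eigenvalue $2$, which is how $SHI$ is cut out of the closure, and then decompose further into the generalized eigenspaces of $\mu(\bar S)$. The structure theorem for these $\mu$-operators shows that their eigenvalues are even integers, so writing the eigenvalue on a given summand as $2i$ defines the $\intg$-valued Alexander grading $KHI(K,i)$. The only genuine content here is well-definedness: one must check that the resulting graded vector space is independent of the auxiliary choices in the closure and of the particular Seifert surface, which is exactly the invariance established in \cite{kronheimer2010instanton}.

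Statements (2) and (3) together assert that instanton homology detects the Seifert genus, and I would prove them as the two halves of the genus-detection theorem of \cite{kronheimer2010instanton}. The support bound $|i|\le g$ in (2) is an adjunction inequality: the eigenvalues of $\mu(\bar S)$ are bounded in absolute value by a linear function of the genus of $\bar S$, and choosing $S$ of minimal genus $g$ makes this bound sharp. For the nonvanishing (3) I would decompose $(S^3(K),\ga_{\mu})$ along $S$; by the sutured-decomposition formula for $SHI$, the resulting taut balanced sutured manifold computes precisely the top summand $KHI(K,g)$, and the nonvanishing theorem for $SHI$ of taut sutured manifolds then yields $KHI(K,g)\ne 0$.

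For (4) I would use that reversing the orientation of $\bar S$ carries $\mu(\bar S)$ to $-\mu(\bar S)$ and hence interchanges the eigenvalues $2i$ and $-2i$; combined with the induced self-isomorphism of the closure this gives $KHI(K,i)\cong KHI(K,-i)$. For (5) I would again decompose along the minimal-genus $S$: the knot $K$ is fibred exactly when this decomposition produces a product sutured manifold $S\times[-1,1]$, whose $SHI$ is one-dimensional, so fibredness forces $KHI(K,g)\cong\mathbb{C}$. The converse is the hard direction, needing the product-detection result that a taut balanced sutured manifold with one-dimensional $SHI$ must itself be a product. The main obstacles are precisely these two deepest inputs, the nonvanishing theorem behind (3) and the product-detection statement behind the converse of (5), both of which rest on the full excision and sutured-decomposition apparatus of \cite{kronheimer2010knots,kronheimer2010instanton} rather than on any formal manipulation.
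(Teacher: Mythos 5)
The paper offers no proof of this theorem: it is stated as a summary of results of Kronheimer and Mrowka, with the content deferred entirely to \cite{kronheimer2010knots,kronheimer2010instanton}. Your sketch correctly reconstructs the arguments in those references — the Alexander grading from the simultaneous generalized eigenspaces of $\mu(\mathrm{pt})$ and $\mu(\bar S)$ for a capped-off minimal-genus Seifert surface, the eigenvalue (adjunction) bound for (2), sutured decomposition along $S$ plus the nonvanishing theorem for taut sutured manifolds for (3), orientation reversal of $\bar S$ for (4), and the product/one-dimensionality detection for (5) — so it is consistent with the proof the paper is implicitly relying on.
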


\bthm[Kronheimer and Mrowka \cite{kronheimer2010instanton}]
Suppose $K\subset S^3$ is a knot. Let
$$\Delta_K(t)=\sum_{i\in\intg}a_it^i$$
be its Alexander polynomial. Then we know that
$$\dim_{\mathbb{C}}KHI(K)\geq \sum_{i\in\intg}|a_i|.$$
Here $|\cdot|$ means the abstract value.
\ethm

In \cite{li2019direct,li2019tau,li2020heegaard}, the first author and his collaborators studied different sutures on the knot complements. Suppose $K\subset S^3$ is a knot. Let $\ga_{(p,q)}$ be the suture on $\partial S^3(K)$ consisting of two simple closed curves of slope $q/p$ on $\partial S^3(K)$. We have the following.

\begin{thm}\label{thm: properties of SHI}
Suppose $K\subset S^3$ is a knot of genus $g$. For any pair of co-prime integers $(p,q)\in\intg^2$, $SHI(S^3(K), \gamma_{(p,q)})$ admits a grading that sits in either $\intg$ or $\intg+\frac{1}{2}$:

If q is odd, then

 \[SHI(S^3(K), \gamma_{(p,q)}) = \bigoplus_{i \in \mathbb{Z}} SHI(S^3(K), \gamma_{(p,q)}, i).\]
 
 If q is even, then
 
 \[SHI(S^3(K), \gamma_{(p,q)}) = \bigoplus_{i \in \mathbb{Z} + \frac{1}{2}} SHI(S^3(K), \gamma_{(p,q)}, i).\]

Furthermore, the following is true.
\begin{enumerate}
\item For any $i$ with $\mid i \mid > g + \frac{q-1}{2}$, we have 
$$SHI(S^3(K), \gamma_{(p,q)}, i)= 0$$
\item We have 
$$SHI(S^3(K), \gamma_{(p,q)}, g + \frac{q-1}{2}) \not = 0.$$
\item For any $i$, we have
$SHI(S^3(K), \gamma_{(p,q)}, i) \cong SHI(S^3(K), \gamma_{(p,q)}, -i)$.
\item We have
\begin{equation}\label{eq: middle grading}
	SHI(S^3(K),\ga_{(1,-2g-1)},0)\cong\mathbb{C}.
\end{equation}
\item We have
\begin{equation}\label{eq: I-sharp and SHI}
	I^{\sharp}(S^{3}_{-2g-1}(K))\cong\bigoplus_{i=-g}^gSHI(S^3(K),\ga_{(2,-4g-1)},i).
\end{equation}

\end{enumerate}
\end{thm}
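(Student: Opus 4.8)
The plan is to derive the whole package from the grading formalism for sutured instanton homology, supplemented by two more delicate inputs for (\ref{eq: middle grading}) and (\ref{eq: I-sharp and SHI}). First I would fix a genus-$g$ Seifert surface $S$ of $K$ and isotope $\partial S$ to meet the two curves of $\ga_{(p,q)}$ minimally. On the boundary torus $\partial S$ is a longitude of slope $0$ while the suture has slope $q/p$, so the geometric intersection number is $|q|$ and $S$ meets the suture in $2|q|$ points. Taking a closure of $(S^3(K),\ga_{(p,q)})$, capping $S$ off to a closed surface $\bar{S}$, and splitting the instanton homology of the closure into the generalized eigenspaces of the operator $\mu(\bar{S})$ produces the grading, following the constructions of \cite{li2019direct,li2019tau,li2020heegaard}. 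The extremal grading takes the form $g+\frac{q-1}{2}$, where $g$ is the genus contribution and $\frac{q-1}{2}$ records the suture intersections; this quantity is an integer exactly when $q$ is odd, and since the grading is symmetric about $0$ with integer steps, this forces the $\intg$ versus $\intg+\frac12$ dichotomy.

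Next I would establish (1)--(3). The support bound (1) and the non-vanishing (2) should follow from the adjunction inequality for the eigenvalues of $\mu(\bar{S})$ together with the Thurston-norm and genus detection property of sutured instanton homology of Kronheimer and Mrowka \cite{kronheimer2010knots,kronheimer2010instanton}: the largest eigenvalue is controlled by the genus of the capped-off surface $\bar{S}$ in the closure, which is $g+\frac{q-1}{2}$, and the extremal eigenspace is nonzero precisely because $\bar{S}$ is norm-minimizing. The symmetry (3) follows from the conjugation symmetry of instanton Floer homology, which reverses the orientation of $\bar{S}$ and hence negates the spectrum of $\mu(\bar{S})$, identifying the grading-$i$ and grading-$(-i)$ summands.

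For (\ref{eq: middle grading}) I would invoke the sutured manifold decomposition theorem in the instanton setting. The slope $-2g-1$ is chosen precisely so that decomposing $(S^3(K),\ga_{(1,-2g-1)})$ along $S$ isolates the middle grading $i=0$ and leaves a product sutured manifold; since the sutured instanton homology of a product sutured manifold is one dimensional, this yields $SHI(S^3(K),\ga_{(1,-2g-1)},0)\cong\mathbb{C}$. Here one must verify that the decomposing surface is taut and groomed so that the decomposition formula matches exactly the $i=0$ summand and that no gradings collapse together, which is where most of the bookkeeping lies.

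Finally, (\ref{eq: I-sharp and SHI}) is a large-surgery formula. Using $I^{\sharp}(Y)=SHI(Y(1),\delta)$ together with a surgery exact triangle relating the Dehn filling $S^3_{-2g-1}(K)$ to the complement with the even suture $\ga_{(2,-4g-1)}$, the framed instanton homology of the surgery is expressed as a direct sum of graded pieces of $SHI(S^3(K),\ga_{(2,-4g-1)})$; the support computation from (1)--(2) then shows that only the $2g+1$ central gradings $-g\le i\le g$ survive, giving the stated truncation. I expect the main obstacle to lie in these last two steps: pinning down the exact slopes $-2g-1$ and $-4g-1$ and verifying that the decomposition, respectively the surgery triangle, interacts with the $\mu(\bar{S})$-grading in exactly the claimed fashion---ruling out contributions outside the central band and confirming that the product decomposition produces a single copy of $\mathbb{C}$ rather than a larger space.
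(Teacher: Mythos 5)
First, a point of comparison: the paper does not prove this theorem at all. It is stated as background and attributed to \cite{li2019direct,li2019tau,li2020heegaard}, so there is no in-paper argument to measure your proposal against. Judged against how those references actually establish these facts, your framework for the grading and for items (1)--(3) is the right one: the grading does come from the generalized eigenspaces of $\mu(\bar S)$ on a closure, the extremal eigenvalue is controlled by an adjunction-type bound with the top grading $g+\frac{|q|-1}{2}$ (note the absolute value, which the paper suppresses), the non-vanishing at the top comes from the decomposition along the taut Seifert surface, and the symmetry comes from the eigenvalue symmetry of the pair $(\mu({\rm pt}),\mu(\bar S))$. That much is a faithful sketch.

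Your arguments for (4) and (5), however, have genuine gaps. For (4): decomposing $(S^3(K),\gamma_{(1,-2g-1)})$ along $S$ computes the \emph{top} grading $i=2g$, not the middle grading $i=0$, and the decomposed manifold is the Seifert-surface complement, which is a product sutured manifold if and only if $K$ is fibred. If your mechanism were correct, (4) would hold only for fibred knots, whereas the theorem asserts it for every genus-$g$ knot; the actual proof identifies the middle grading by a different route (comparing bypass maps across the direct system of sutures and relating the stable middle range to a one-dimensional invariant), not by a single decomposition along $S$ yielding a product. For (5): with $q=-4g-1$ the support bound from (1)--(2) gives gradings in $[-3g,3g]$ with the extremal grading $\pm 3g$ \emph{nonzero}, so the sum over $-g\le i\le g$ is a strict truncation of the full $SHI(S^3(K),\gamma_{(2,-4g-1)})$ and cannot be deduced from the support computation as you claim. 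Pinning down exactly which $2g+1$ gradings assemble into $I^{\sharp}(S^3_{-2g-1}(K))$ requires the excision/gluing argument relating the Dehn filling to the complement with the adjacent suture, together with grading bookkeeping that your sketch does not supply.
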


Bypass triangles were introduced in the instanton theory by Baldwin and Sivek \cite{baldwin2018khovanov} to relate different sutures on the knot complements. The first author further studied a graded version of bypass exact triangle in \cite{li2019direct}.

\begin{thm}\label{thm: graded bypass}
Suppose $K\subset S^3$ is a knot. For any $i\in\intg$, there are three exact triangles
\begin{equation}\label{eq: graded exact triangle, +}
    \xymatrix{
    SHI(S^3(K), \gamma_{(1,-2g)},i+\frac{1}{2})\ar[r]^{\psi_{+,i}}&SHI(S^3(K), \gamma_{(1,-2g-1)}, i)\ar[dl]\\
    KHI(K, i-g)\ar[u]
    }
\end{equation}

\begin{equation}\label{eq: graded exact triangle,-}
    \xymatrix{
    SHI(S^3(K), \gamma_{(1,-2g)},i-\frac{1}{2})\ar[r]^{\psi_{-,i}}&SHI(S^3(K), \gamma_{(1,-2g-1)}, i)\ar[dl]\\
    KHI(K, i+g)\ar[u]
    }
\end{equation}

\begin{equation}\label{eq: graded exact triangle, extended}
    \xymatrix{
    SHI(S^3(K), \gamma_{(1,-2g)},g+i+\frac{1}{2})\ar[r]^{}&SHI(S^3(K), \gamma_{(2,-4g-1)}, i)\ar[dl]\\
    SHI(S^3(K), \gamma_{(1,-2g-1)}, -g+i)\ar[u]
    }
\end{equation}
\end{thm}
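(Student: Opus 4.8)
The plan is to realize all three triangles as graded refinements of the ungraded bypass exact triangle of Baldwin and Sivek \cite{baldwin2018khovanov}, following the strategy the first author used in \cite{li2019direct}. Recall that a properly embedded bypass arc on $\partial M$ determines, via contact handle attachment, a canonical map on $SHI$, and that the three sutures obtained from the three bypasses associated to one arc fit into an exact triangle. On the torus $\partial S^3(K)$ the elementary input is combinatorial: two sutures, each a pair of parallel curves whose slopes are Farey neighbors (geometric intersection number one), are joined by a bypass arc, and the third vertex of the triangle is the meridian-type suture completing the Farey triangle. The first task is therefore to identify, for each statement, the bypass arc relating the sutures.

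For the first two triangles the three slopes are $-2g$, $-(2g+1)$ and the meridian, corresponding to $\ga_{(1,-2g)}$, $\ga_{(1,-2g-1)}$ and $KHI(K)$. These are pairwise Farey neighbors, since $\det\!\left(\begin{smallmatrix}1&-2g\\ 1&-2g-1\end{smallmatrix}\right)=-1$ and each of $\ga_{(1,-2g)},\ga_{(1,-2g-1)}$ meets the meridian once. A single unoriented bypass arc thus produces the ungraded triangle, and the two distinct bypass arcs realizing it, the positive and negative bypasses, give the two maps $\psi_{+,i}$ and $\psi_{-,i}$; this is the geometric source of the two different shifts $i-g$ and $i+g$ on the $KHI$ vertex.

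The essential work is the graded refinement. The grading of Theorem \ref{thm: properties of SHI} is built from a genus $g$ Seifert surface $S$: after isotoping $\partial S$ to meet the suture $\ga_{(p,q)}$ minimally, one decomposes the sutured complement along suitably stabilized copies of $S$, and $SHI(S^3(K),\ga_{(p,q)},i)$ is the summand recording a prescribed signed intersection count. I would take the bypass arc disjoint from a collar of $\partial S$, so that the bypass cobordism is supported away from the decomposing surface and commutes with the decomposition up to a fixed change in the intersection count. Tracking this change should show that $\psi_{\pm,i}$ is homogeneous, sending the grading $i\pm\tfrac12$ on $\ga_{(1,-2g)}$ to the grading $i$ on $\ga_{(1,-2g-1)}$, with remaining vertex $KHI(K,i\mp g)$; the magnitude $g$ of the shift reflects the $2g$ intersection points of $\partial S$ with $\ga_{(1,-2g)}$ together with the top grading normalization $g+\tfrac{q-1}{2}$ of Theorem \ref{thm: properties of SHI}(2).

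The extended triangle is the most delicate. Here $\ga_{(1,-2g)}$ and $\ga_{(2,-4g-1)}$ remain Farey neighbors, since $-(4g+1)/2$ is the Farey mediant of $-2g$ and $-(2g+1)$ and $\det\!\left(\begin{smallmatrix}1&-2g\\ 2&-4g-1\end{smallmatrix}\right)=-1$, but the third vertex $\ga_{(0,-2g-1)}$ is meridional and meets $\ga_{(2,-4g-1)}$ twice, so it is not a plain Farey-neighbor triangle of two-curve sutures. I would instead produce it from a bypass arc whose attachment alters the component structure, realizing the meridional suture (with the framing normalization recorded by the subscript $-2g-1$) as the third term, and then rerun the decomposition argument to pin down the shift $g+i+\tfrac12$. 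The main obstacle is exactly this grading bookkeeping rather than the bare existence of the triangles: one must prove that the bypass maps are genuinely homogeneous with the stated shifts, and that the ungraded exactness descends grading-by-grading so that each displayed sequence is exact in every fixed $i$, not merely after summing. This needs a decomposing surface simultaneously compatible with all three sutures and the bypass arc, careful control of the half-integer shifts and their dependence on the parity of $q$, and, for the extended triangle, a check that the component-changing bypass still respects the Seifert-surface grading. Once homogeneity with the computed shifts is in hand, restricting the Baldwin--Sivek triangle to the appropriate graded pieces yields all three exact triangles.
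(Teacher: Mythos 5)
Your proposal has to be measured against what the paper actually does with this statement: the paper does not prove Theorem \ref{thm: graded bypass} at all --- it is a preliminary quoted from \cite{li2019direct}, which builds on the ungraded bypass triangles of \cite{baldwin2018khovanov} --- so the comparison is with the cited proof. For the first two triangles your outline does match that proof's architecture: Baldwin--Sivek's bypass exact triangle for the Farey triple $\{-2g,\,-2g-1,\,\infty\}$, refined by showing the bypass (contact-handle) maps are homogeneous for the Seifert-surface grading and computing the shifts. But as written this is a plan, not a proof: the homogeneity of $\psi_{\pm,i}$ with the precise shifts \emph{is} the entire content of the graded statement, and your device of taking the bypass arc disjoint from a collar of $\partial S$ does not suffice. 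The gradings on the two sides of the map are defined by decomposing along \emph{different} stabilizations of $S$, because the number of stabilizations needed depends on $|\partial S\cap\gamma|$, which changes under the bypass; relating decompositions along differently stabilized surfaces is exactly the nontrivial technical work carried out in \cite{li2019direct}, and it is the step your sketch defers.

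The genuine error is in your treatment of the third triangle. You read $\gamma_{(0,-2g-1)}$ literally as a meridional suture and propose a component-changing bypass realizing it. No such bypass triangle exists: for two-curve sutures on the boundary torus, sutures related by a single bypass must have slopes that are Farey neighbors, and the meridian $1/0$ has geometric intersection number $2$ with $(-4g-1)/2$, so $\{-2g,\,(-4g-1)/2,\,1/0\}$ cannot be the three vertices of a bypass triangle. Your own mediant computation already shows that the Farey triangle containing the edge $\{-2g,\,(-4g-1)/2\}$ is completed by $-2g-1$. In fact $(0,-2g-1)$ is a typo for $(1,-2g-1)$: the pair $(0,-2g-1)$ is not even coprime, and in the proof of Lemma \ref{lem: middle 4g+1} the paper manipulates this third vertex as $SHI(S^3(K),\gamma_{(1,-2g-1)},-g+i)$, invoking its symmetry and its value $\mathbb{C}$ in grading $0$. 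With the corrected vertex, the extended triangle is again a plain Farey-neighbor triangle on $\{-2g,\,-2g-1,\,(-4g-1)/2\}$ and is handled by the same graded-bypass argument as the first two; the detour through a meridional third vertex should be deleted, since pursuing it means trying to prove a false statement.
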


One of the main result of \cite{baldwin2018khovanov} can be re-stated as follows.
\bthm\label{thm: vanishing psi}
Suppose $K$ is a fibred knot and is not right veering. Then the map
$$\psi_{+,2g-1}:SHI(S^3(K), \gamma_{(1,-2g)},2g-\frac{1}{2})\ra SHI(S^3(K), \gamma_{(1,-2g-1)}, 2g-1)$$
in (\ref{eq: graded exact triangle, +}) is zero.
\ethm

Next, we introduce the Heegaard diagrams of $3$-manifolds and knots.

\bdefn
A \textbf{(genus $g$) diagram} is a triple $(\Sigma,\al,\be)$ so that the followings hold.
\begin{enumerate}
    \item We have $\Sigma$ being a connected closed surface of genus $g$.
    \item We have $\al=\{\al_1,\dots,\al_m\}$ and $\be=(\be_1,\dots,\be_n)$ being two sets of pair-wise disjoint simple closed curves on $\Sigma$. We do not distinguish the set and the union of curves.
\end{enumerate}

A \textbf{(genus $g$) Heegaard diagram} is a (genus $g$) diagram $(\Sigma,\al,\be)$ satisfying the following conditions.
\begin{enumerate}
    \item We have $|\al|=|\be|=g$, \textit{i.e.}, there are $g$ many curves in either tuple.
    \item The complements $\Sigma\backslash \al$ and $\Sigma\backslash\be$ are connected.
\end{enumerate}
\edefn

It is a basic fact in low dimensional topology.
\bthm
Any knot $K\subset S^3$ admits a Heegaard diagram.
\ethm

\section{Knots diagrams and instanton knot homology}
In this section, we prove Theorem \ref{thm: algorithm}.
\begin{proof}[Proof of theorem \ref{thm: algorithm}]
Suppose $K\subset S^3$ is a knot and $D$ is a knot diagram of the knot.

{\bf Step 1}. We construct a Heegaard diagram from a knot diagram. To do this, we first form the singular knot $K_s$ by replacing every crossing of $D$ with two arcs that intersect at one point. We can think of $K_s$ as embedded in $S^3$. Then let $H=S^3\backslash N(K_s)$. It is straightforward to check that $H$ is a handle body. Let $\Sigma=\partial H$ and the $\alpha$-curves consists of $g=c(D)+1$ meridians of $H$, where $g$ is the genus of $H$ and $c(D)$ denotes the number of crossings in $D$. Next, we need to find the $\beta$-curves. We need $g$ many of them. We draw one $\beta$-curve around each crossing of $K$ according to the principle shown in Figure \ref{curve around crossing} (note there are $c(K)=g-1$ many) and pick a meridian of $K$ to be the last $\beta$-curve. As in \cite{hom2020notes}, this gives us a Heegaard diagram of $K$.

\begin{figure}[ht]
\centering
\begin{overpic}[width=5.5in]{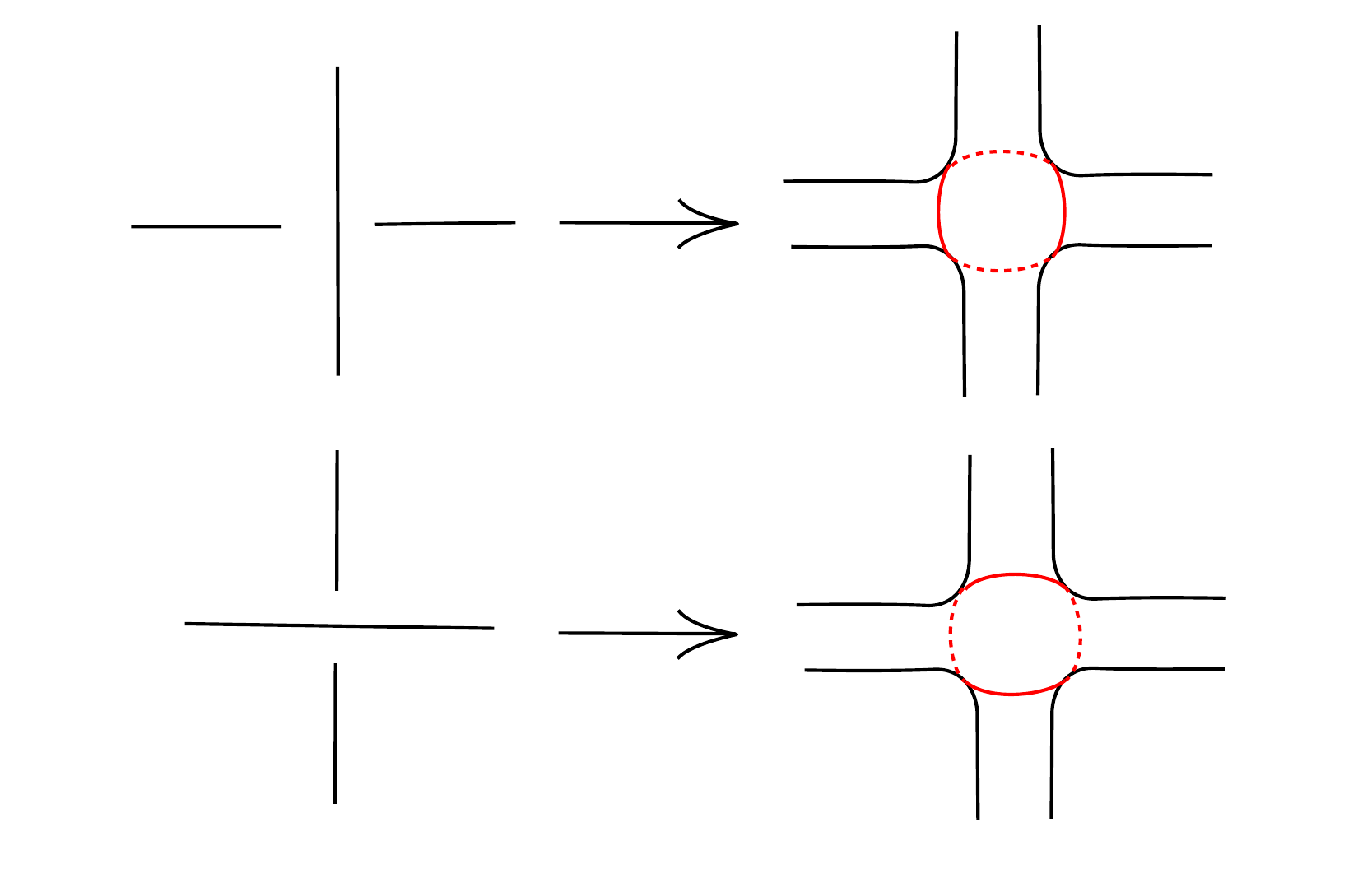}
    
\end{overpic}
\vspace{0.05in}
\caption{The red curves on the right are the $\beta$-curves.}\label{curve around crossing}
\end{figure}

{\bf Step 2}. We construct a sutured handle body from $(\Sigma,\al,\be)$. We simply pick $H=S^3\backslash N(K_s)$ to be the handle body, and all $\be$-curves are the components of $\ga$. Also, $\ga$ has one last component obtained by $(g-1)$ band sums on $\be$-curves that make all $g$ many $\beta$-curves into one connected simple closed curve that can be isotoped to be disjoint from all of the original $\beta$-curves. Theorem \ref{thm: key inequality} then applies and we have
$${\rm dim}_{\mathbb{C}}KHI(K)\leq {\rm dim}_{\mathbb{C}}SHI(H,\ga).$$

{\bf Step 3}. We compute an upper bound of $\dim_{\mathbb{C}}SHI(H,\ga)$. This is done by induction based on the following two lemmas.

To present the first lemma, recall we have $g$ many $\alpha$-curves. Call them $\al_1$,..., $\al_g$.

\blem[Kronheimer and Mrowka \cite{kronheimer2010knots}]\label{lem: rank at most 1} If for every index $i\in\{1,...,g\}$, we have
$$|\al_i\cap \ga|\leq 2,$$
where $|\cdot|$ denotes the number of intersection points, then 
$$dim_{\mathbb{C}}SHI(H,\ga)\leq 1.$$
\elem

\blem[Baldwin and Sivek \cite{baldwin2018khovanov}]\label{lem: bypass triangle} Suppose $i\in\{1,...,g\}$ and $\be\subset\partial D_i$ is part of $\partial D_i$ so that
$$\partial\be\subset \ga~{\rm and~}|\be\cap\ga|=3.$$
See Figure \ref{fig_by_pass} for an example. Within a neighborhood of $\be$, we can alter the suture $\ga$ as shown in Figure \ref{fig_by_pass}, and get two new sutures $\ga'$ and $\ga''$. Then we have
$$dim_{\mathbb{C}}SHI(H,\ga)\leq dim_{\mathbb{C}}SHI(H,\ga')+dim_{\mathbb{C}}SHI(H,\ga'').$$
\elem

It is clear that
$$|\ga'\cap D_i|\leq |\ga\cap D_i|-2~{\rm and~}|\ga''\cap D_i|\leq |\ga\cap D_i|-2.$$
So using Lemma \ref{lem: rank at most 1} we can reduce the number of intersections of $\ga$ with arbitrary meridian disk of $H$. When $\ga$ intersects all meridian disks at most two times, Lemma \ref{lem: bypass triangle} applies and we can obtain a bound on $dim_{\mathbb{C}}SHI(H,\ga)$ for any suture $\ga$.

	

\begin{figure}[h]
\centering
\begin{overpic}[width=4.0in]{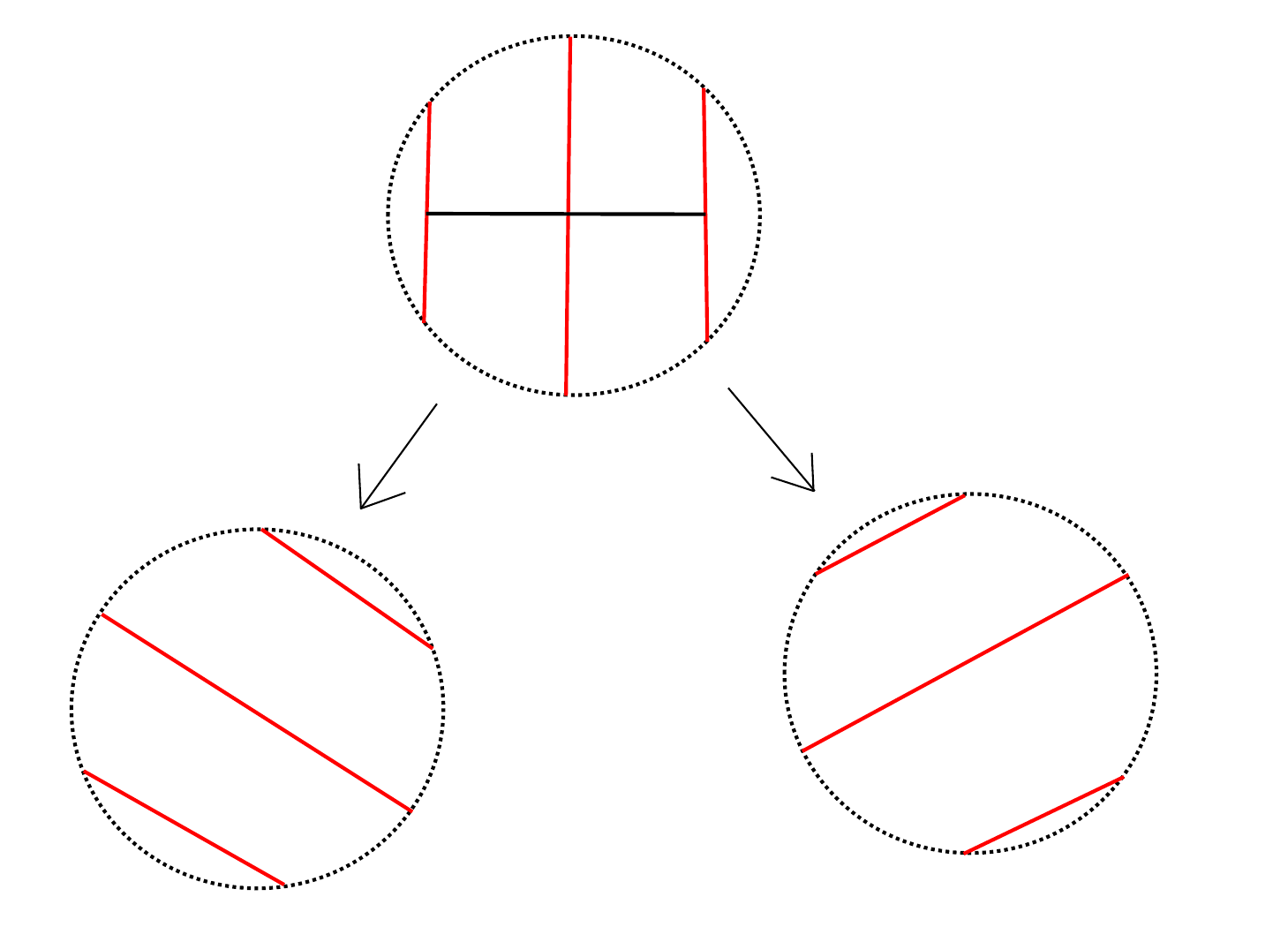}
	\put(4,25){$a$}
	\put(18,33){$b$}
	\put(35,22){$c$}
	\put(33,8){$d$}
	\put(21,1){$e$}
	\put(3,10){$f$}
	\put(16,15){$\ga'$}
	
	\put(59,28){$a$}
	\put(73,36){$b$}
	\put(88,28){$c$}
	\put(88,10){$d$}
	\put(74,3){$e$}
	\put(59,12){$f$}
	\put(75,17){$\ga''$}
	
	\put(29,65){$a$}
	\put(43,72){$b$}
	\put(56.5,65){$c$}
	\put(55.5,45){$d$}
	\put(42.5,39){$e$}
	\put(30,45){$f$}
	\put(40,50){$\ga$}
	\put(48,58){$\be$}
\end{overpic}
\vspace{0.1in}
\caption{A by-pass move obtaining $\ga_1$ and $\ga_2$ from $\ga$. The red curves are the sutures. The dotted circle bounds the disk $E\subset \Sigma_n$.}\label{fig_by_pass}
\end{figure}

\end{proof}

We have performed computations for all knots with crossing number at most $7$, as well as a more complicated knot $10_{153}$. The results are summarized in Table \ref{tab: knots}. 
\begin{table}[h!]
\begin{center}
\begin{tabular}{ |c|c|c| } 
 \hline
 Knots & Upper bound for $\dim_{\mathbb{C}}KHI$ & Alexander polynomial \\ 
 \hline
 $3_1$ & 3 & $t-1+t^{-1}$ \\ 
 \hline
 $4_1$ & 5 & $-t+3-t^{-1}$ \\ 
  \hline
 $5_1$ & 5 & $t^2-t+1-t^{-1}+t^{-2}$ \\ 
  \hline
 $5_2$ & 7 & $2t-3+2t^{-1}$ \\ 
  \hline
 $6_1$ & 9 & $-2t+5-2t^{-1}$\\ 
  \hline
 $6_2$ & 11 & $-t^2+3t-3+3t^{-1}-t^{-2}$ \\ 
  \hline
 $6_3$ & 13 & $t^2-3t+5 -3 t^{-1} + t^{-2}$ \\ 
  \hline
 $7_1$ & 7 & $t^3 -t^2+t-1+t^{-1}-t^{-2}+t^{-3}$ \\ 
  \hline
 $7_2$ & 11 & $3t+5 + 3t^{-1}$ \\ 
  \hline
 $7_3$ & 13 & $2t^2-3t+3-3t^{-1}+2t^{-2}$ \\ 
  \hline
 $7_4$ & 15 &  $4t-7+4t^{-1}$\\ 
 \hline
 $7_5$ & 17 &  $2t^2-4t+5-4t^{-1} + 2t^{-2}$\\ 
  \hline
 $7_6$ & 19 & $-t^2+5t-7+5t^{-1}-t^{-2}$ \\ 
  \hline
 $7_7$ & 21 & $t^2-5t+9-5t^{-1}+t^{-2}$ \\  
 \hline
 $10_{153}$ & 17 & $t^3-t^2-t^1+3-t^{-1}-t^{-2}+t^{-3}$\\
 \hline
\end{tabular}
\end{center}
\caption{Knots with small crossings. We use Rolfsen's knot table in \cite{rolfsen2003knot} to name all these knots. To obtain a lower bound (which coincide with the upper bound), one can sum up the abstract value of all coefficients of the Alexander polynomial.}\label{tab: knots}
\end{table}

\brem
Besides knots with small crossings, we also tried another knot $10_{153}$. The reason why we work on this particular knot is the following. As explained in the introduction, for alternating knots, the upper bounds of the dimension of $KHI$ coincide with the lower bound coming from the Alexander polynomial. In \cite{li2020heegaard}, the first author and his collaborator computed upper bounds for all $(1,1)$-knots, and for many families of $(1,1)$-knots, the upper bounds obtained in \cite{li2020heegaard} are better than those from Khovanov homology. Hence we are interested in finding more examples outside the range of alternating knots and $(1,1)$-knots. We didn't find a complete list for all $(1,1)$-knots, so we turn to search in the knots with tunnel number at least $2$, since all $(1,1)$-knots are known to have tunnel number $1$. So $10_{153}$ is the first knot $K$ came into our sight that satisfies the following three conditions:

\begin{enumerate}
\item The knot has tunnel number at least $2$ and is not alternating.
\item The upper bound from Khovanov homology is strictly larger than the lower bound from the Alexander polynomial.
\item The Alexander polynomial of the knot is not too complicated.
\end{enumerate}

Unfortunately, the upper bound we obtained for $10_{153}$, which is $17$, coincides with the upper bound from Khovanov homology. Note this upper bound is strictly greater than the lower bound from Alexander polynomial, so the precise dimension of $KHI$ for $10_{153}$ is still open. 
\erem

\section{Dehn surgeries on knots}
In this section, we prove Theorem \ref{thm : main theorem}.

\begin{proof}[Proof of Theorem \ref{thm : main theorem}]
Suppose $K\subset S^3$ is a knot of genus $g$ and its instanton knot homology satisfies the assumption in the hypothesis of the theorem. Note from the assumption that $KHI(K,g)\cong\mathbb{C}$ and Theorem \ref{thm: properties of KHI}, we know that $K$ is fibred and $g\geq 2$. Then either $K$ or the mirror of $K$ is not right veering. By passing to its mirror if necessary, we can assume that $K$ itself is not right veering.  We begin with a few lemmas.
\begin{lem}\label{lem : first top grading}
We have	
\begin{equation}\label{eq: top for 2g+1}
SHI(S^3(K), \gamma_{(1,-2g-1)}, 2g) \cong\mathbb{C}
\end{equation}
and
\begin{equation}\label{eq: top for 2g}
SHI(S^3(K), \gamma_{(1,-2g)}, 2g-\frac{1}{2}) \cong\mathbb{C}.
\end{equation}
\end{lem}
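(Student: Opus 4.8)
The plan is to read off both isomorphisms from the graded bypass exact triangle (\ref{eq: graded exact triangle, +}) of Theorem \ref{thm: graded bypass}, feeding in the known values of $KHI(K,\cdot)$ from the hypothesis (\ref{eq: intro}), together with the vanishing/nonvanishing statements of Theorem \ref{thm: properties of SHI} and the fact that $\psi_{+,2g-1}=0$, which holds by Theorem \ref{thm: vanishing psi} since $K$ is fibred and (after passing to the mirror) not right veering. The only preliminary input needed is that $2g-\tfrac12$ is the top nonzero grading of $SHI(S^3(K),\gamma_{(1,-2g)})$: nothing sits above it by part (1) of Theorem \ref{thm: properties of SHI}, and the grading itself is nonzero by part (2).

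For (\ref{eq: top for 2g+1}) I would specialize (\ref{eq: graded exact triangle, +}) to $i=2g$, producing the triangle relating $SHI(S^3(K),\gamma_{(1,-2g)},2g+\tfrac12)$, $SHI(S^3(K),\gamma_{(1,-2g-1)},2g)$, and $KHI(K,g)$. Since $2g+\tfrac12$ lies above the top grading $2g-\tfrac12$ of $\gamma_{(1,-2g)}$, the first term vanishes, so the triangle collapses to an isomorphism $SHI(S^3(K),\gamma_{(1,-2g-1)},2g)\cong KHI(K,g)\cong\mathbb{C}$. This half is essentially automatic.

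For (\ref{eq: top for 2g}), the delicate half, I would instead specialize (\ref{eq: graded exact triangle, +}) to $i=2g-1$. The first map of the resulting triangle is exactly the map $\psi_{+,2g-1}$ of Theorem \ref{thm: vanishing psi}, hence is zero, and the triangle unrolls into a short exact sequence
$$0\to SHI(S^3(K),\gamma_{(1,-2g-1)},2g-1)\to KHI(K,g-1)\to SHI(S^3(K),\gamma_{(1,-2g)},2g-\tfrac12)\to 0.$$
Here $KHI(K,g-1)\cong\mathbb{C}$ by (\ref{eq: intro}) (the inequality $g\ge2$ guarantees that $g-1$ is a genuine positive grading appearing in that list), so the two outer terms have dimensions summing to $1$. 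By part (2) of Theorem \ref{thm: properties of SHI} the right-hand term is nonzero, hence has dimension at least $1$; together with the sum being $1$ this forces it to be exactly $1$, giving (\ref{eq: top for 2g}) (and, as a byproduct, $SHI(S^3(K),\gamma_{(1,-2g-1)},2g-1)=0$).

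The main obstacle is this second equation, since there is no direct handle on the middle term $SHI(S^3(K),\gamma_{(1,-2g-1)},2g-1)$; the argument is designed to sidestep it. The crucial move is to convert the topological hypotheses --- fibredness together with non-right-veering --- into the algebraic vanishing $\psi_{+,2g-1}=0$, and then to play this against the nonvanishing of the top grading of $\gamma_{(1,-2g)}$, letting a dimension count finish the job. Before finalizing I would double-check the grading normalizations (that $2g-\tfrac12$ is indeed the top grading and that the map at $i=2g-1$ in (\ref{eq: graded exact triangle, +}) matches the $\psi_{+,2g-1}$ of Theorem \ref{thm: vanishing psi}) and verify the exactness bookkeeping that produces the displayed short exact sequence from the vanishing of the first map.
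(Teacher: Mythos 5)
Your proof of (\ref{eq: top for 2g+1}) is essentially the paper's: collapse the bypass triangle (\ref{eq: graded exact triangle, +}) at $i=2g$ using the vanishing of $SHI(S^3(K),\gamma_{(1,-2g)},2g+\tfrac{1}{2})$ above the top grading, leaving an isomorphism with $KHI(K,g)\cong\mathbb{C}$. For (\ref{eq: top for 2g}) you take a genuinely different route. The paper deduces it from (\ref{eq: top for 2g+1}) alone, via the bypass triangle whose $KHI$ corner sits in grading $3g>g$ and hence vanishes by part (2) of Theorem \ref{thm: properties of KHI} (this is in effect the $i=2g$ instance of what later becomes Lemma \ref{lem: identifying gradings}), so no contact-geometric input is needed for this lemma. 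You instead run (\ref{eq: graded exact triangle, +}) at $i=2g-1$, invoke $\psi_{+,2g-1}=0$ from Theorem \ref{thm: vanishing psi} to unroll the triangle into the short exact sequence
$$0\to SHI(S^3(K),\gamma_{(1,-2g-1)},2g-1)\to KHI(K,g-1)\to SHI(S^3(K),\gamma_{(1,-2g)},2g-\tfrac{1}{2})\to 0,$$
and finish with a dimension count against the nonvanishing of the top grading of $\gamma_{(1,-2g)}$. Your argument is correct --- the exactness bookkeeping and the identification of the first map with $\psi_{+,2g-1}$ both check out --- and it has the bonus of establishing $SHI(S^3(K),\gamma_{(1,-2g-1)},2g-1)=0$, i.e.\ Lemma \ref{lem : second top grading}, in the same stroke. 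The trade-off is that your route consumes the fibredness/non-right-veering input and the hypothesis $KHI(K,g-1)\cong\mathbb{C}$ already at this stage, whereas the paper's proof of this particular lemma needs neither and defers Theorem \ref{thm: vanishing psi} to the next lemma.
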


\begin{proof}
Applying term 2 of Theorem \ref{thm: properties of KHI}, Formula (\ref{eq: graded exact triangle, +}) from Theorem \ref{thm: graded bypass}, and the assumption that $KHI(K,g)\cong\mathbb{C}$ in the hypothesis, we conclude (\ref{eq: top for 2g+1}). Similarly, (\ref{eq: top for 2g}) follows from term 2 of Theorem \ref{thm: properties of KHI}, Formula (\ref{eq: top for 2g+1}), and Formula (\ref{eq: graded exact triangle, +}) from Theorem \ref{thm: graded bypass}.
\end{proof}

\begin{lem}\label{lem : second top grading}

 $SHI(S^3(K), \gamma_{(1,-2g-1)}, 2g-1) =0$.	
\end{lem}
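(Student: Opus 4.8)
The plan is to read off this vanishing directly from the graded bypass exact triangle of Theorem \ref{thm: graded bypass}, specialized to the top Alexander grading, once two inputs are in hand: the top-grading computations of Lemma \ref{lem : first top grading}, and the vanishing of the relevant bypass map furnished by Theorem \ref{thm: vanishing psi}. The whole argument will then be a one-line consequence of exactness together with a dimension count.

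First I would set $i=2g-1$ in the exact triangle (\ref{eq: graded exact triangle, +}). This yields the cyclic exact sequence whose three terms I can identify completely. The source of the bypass map is $SHI(S^3(K),\gamma_{(1,-2g)},2g-\tfrac{1}{2})\cong\mathbb{C}$ by Formula (\ref{eq: top for 2g}) of Lemma \ref{lem : first top grading}; the third term is $KHI(K,i-g)=KHI(K,g-1)$, and since $g\geq 2$ the index $g-1$ lies in $\{g,g-1,0\}$, so the standing hypothesis (\ref{eq: intro}) gives $KHI(K,g-1)\cong\mathbb{C}$; the middle term is exactly the group $SHI(S^3(K),\gamma_{(1,-2g-1)},2g-1)$ I want to kill.

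The key input is that $K$ is fibred and, after passing to its mirror if necessary, not right veering. Theorem \ref{thm: vanishing psi} then applies at precisely the grading $i=2g-1$ appearing here and gives $\psi_{+,2g-1}=0$. With the first arrow of the triangle zero, the cyclic exact sequence collapses to the short exact sequence
\[
0\to SHI(S^3(K),\gamma_{(1,-2g-1)},2g-1)\to KHI(K,g-1)\to SHI(S^3(K),\gamma_{(1,-2g)},2g-\tfrac{1}{2})\to 0,
\]
and a dimension count gives $\dim_{\mathbb{C}}SHI(S^3(K),\gamma_{(1,-2g-1)},2g-1)=\dim_{\mathbb{C}}KHI(K,g-1)-\dim_{\mathbb{C}}SHI(S^3(K),\gamma_{(1,-2g)},2g-\tfrac{1}{2})=1-1=0$, which is the claim.

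There is no real obstacle beyond bookkeeping: one must confirm that the substitution $i=2g-1$ sends the third term $i-g$ to $g-1$ (where the hypothesis forces $\mathbb{C}$, rather than to a grading where $KHI$ vanishes), and that the map in the triangle coincides with the map $\psi_{+,2g-1}$ named in Theorem \ref{thm: vanishing psi}. Both match on the nose, so the only content of the proof is assembling these three already-established facts and invoking exactness.
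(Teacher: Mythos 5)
Your proof is correct and is essentially the paper's own argument, just written out in full: the paper's one-sentence proof cites exactly the same four ingredients (Theorem \ref{thm: vanishing psi}, Formula (\ref{eq: top for 2g}), the hypothesis $KHI(K,g-1)\cong\mathbb{C}$, and the triangle (\ref{eq: graded exact triangle, +}) at $i=2g-1$), and your collapse of the triangle to a short exact sequence followed by the dimension count $1-1=0$ is the intended reasoning.
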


\begin{proof}
	Note we have argued that $K$ is fibred and also assumed that it is not right veering. Hence Theorem \ref{thm: vanishing psi} applies. Then the lemma follows from Theorem \ref{thm: vanishing psi}, Formula (\ref{eq: top for 2g}), the assumption that $KHI(K,g-1)\cong\mathbb{C}$, and Theorem \ref{thm: graded bypass}.
\end{proof}

\begin{lem}\label{lem: identifying gradings}
We have
$$SHI(S^3(K), \gamma_{(1,-2g-1)},  i) \cong SHI(S^3(K), \gamma_{(1,-2g)}, i-\frac{1}{2})$$ for $i>1$.
\end{lem}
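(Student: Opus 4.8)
The plan is to read the isomorphism off directly from one of the graded bypass exact triangles of Theorem \ref{thm: graded bypass}, using nothing more than the support property of $KHI$. The two sutured groups appearing in the claimed identification carry the gradings $i$ and $i-\frac{1}{2}$, so the relevant triangle is (\ref{eq: graded exact triangle,-}) rather than (\ref{eq: graded exact triangle, +}); the $+$-version would produce the shift $i+\frac{1}{2}$ and is therefore the wrong one. Explicitly, (\ref{eq: graded exact triangle,-}) reads
$$SHI(S^3(K), \gamma_{(1,-2g)}, i-\tfrac{1}{2}) \xrightarrow{\psi_{-,i}} SHI(S^3(K), \gamma_{(1,-2g-1)}, i) \to KHI(K, i+g) \to$$
with third vertex $KHI(K, i+g)$.

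First I would unwind this triangle into its associated long exact sequence, in which the two $SHI$ groups of the statement sit adjacent, joined by $\psi_{-,i}$, with $KHI(K, i+g)$ as the common third term on either side. The decisive point is then purely numerical: for $i>1$ we have $i+g > g+1 > g$, hence $|i+g|>g$, so term (2) of Theorem \ref{thm: properties of KHI} forces $KHI(K, i+g)=0$. With the third vertex vanishing, exactness at the left $SHI$ vertex identifies $\ker\psi_{-,i}$ with the image of the zero map out of $KHI(K,i+g)$, so $\psi_{-,i}$ is injective; exactness at the right $SHI$ vertex identifies $\operatorname{im}\psi_{-,i}$ with the kernel of the map into $KHI(K,i+g)=0$, so $\psi_{-,i}$ is surjective. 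Thus $\psi_{-,i}$ is an isomorphism, which is exactly the asserted identification.

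The argument is formal once the correct triangle is fixed; the only genuine content, and the step I would flag as requiring care, is matching the half-integer shift in the statement to the $-$-version of the bypass triangle and confirming that the third term is $KHI(K,i+g)$ rather than $KHI(K,i-g)$, so that the vanishing range $|j|>g$ applies uniformly for every $i>1$. Notably, unlike Lemmas \ref{lem : first top grading} and \ref{lem : second top grading}, this step uses only the general support property of $KHI$ and never invokes the specific profile (\ref{eq: intro}); the stated range $i>1$ sits comfortably inside the region $i>0$ where the vanishing holds.
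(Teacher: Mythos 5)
Your proof is correct, but it takes a slightly different route from the paper's. You apply the $-$-version of the bypass triangle (\ref{eq: graded exact triangle,-}) directly at grading $i$, where the third vertex is $KHI(K,i+g)=0$ for $i>0$ by term (2) of Theorem \ref{thm: properties of KHI}, so $\psi_{-,i}$ is immediately an isomorphism. The paper instead applies the $+$-version (\ref{eq: graded exact triangle, +}) at grading $-i$, where the third vertex is $KHI(K,-i-g)=0$, obtaining $SHI(S^3(K),\gamma_{(1,-2g-1)},-i)\cong SHI(S^3(K),\gamma_{(1,-2g)},-i+\frac{1}{2})$, and then converts this into the stated isomorphism using the grading symmetry $SHI(\cdot,i)\cong SHI(\cdot,-i)$ from term (3) of Theorem \ref{thm: properties of SHI}. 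Your version is the more economical of the two: it needs only the support bound on $KHI$ and dispenses with the symmetry step entirely. Both arguments are formal consequences of the graded bypass triangles, and your observation that the hypothesis $i>1$ is stronger than what the vanishing actually requires ($i>0$ suffices) applies equally to the paper's route.
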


\begin{proof}
Since $-i-g< -g$, we have $KHI(K, -i-g) = 0 $ by term 2 of Theorem \ref{thm: properties of KHI}.
Thus, by Theorem \ref{thm: graded bypass}, we have more isomorphisms
\begin{equation}\label{eq: induction 1}
	 SHI(S^3(K), \gamma_{(1,-2g-1)},  -i) \cong SHI(S^3(K), \gamma_{(1,-2g)},  -i + \frac{1}{2}).
\end{equation}

By term 3 of Theorem \ref{thm: properties of SHI}, we have an isomorphism
\begin{equation}
	 SHI(S^3(K), \gamma_{(1,-2g-1)},  -i) \cong SHI(S^3(K), \gamma_{(1,-2g-1)},  i)
\end{equation}

and 

\begin{equation}
	 SHI(S^3(K), \gamma_{(1,-2g)},  -i + \frac{1}{2})  \cong SHI(S^3(K), \gamma_{(1,-2g)},  i- \frac{1}{2}) 
\end{equation}

The lemma then follows after substituting in Equation (\ref{eq: induction 1}).

\end{proof}

\begin{lem} \label{lem : complete gradings}
We have
\[
    SHI(S^3(K), \gamma_{(1,-2g-1)},i)=\left\{
                \begin{array}{ll}
                  0 &  g+1 \leq i \leq 2g-1 \\ 
                   \mathbb{C} &  2 \leq i \leq g\\
                \end{array}
              \right.
  \]
\end{lem}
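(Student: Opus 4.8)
The plan is to compute each graded piece of $SHI(S^3(K),\gamma_{(1,-2g-1)})$ by a downward induction on the Alexander grading, using the two top gradings from Lemmas \ref{lem : first top grading} and \ref{lem : second top grading} as base cases and propagating with the graded bypass exact triangle of Theorem \ref{thm: graded bypass} together with the identification of Lemma \ref{lem: identifying gradings}. To keep the notation light I abbreviate $B_i=SHI(S^3(K),\gamma_{(1,-2g-1)},i)$, $A_i=SHI(S^3(K),\gamma_{(1,-2g)},i)$, and $K_i=KHI(K,i)$. The engine of the argument is set up as follows: Formula (\ref{eq: graded exact triangle, +}) is the three-periodic exact sequence of the triangle $A_{i+\frac12}\to B_i\to K_{i-g}\to A_{i+\frac12}$, and Lemma \ref{lem: identifying gradings}, applied with grading $i+1$, gives $A_{i+\frac12}\cong B_{i+1}$ for every $i>0$. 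Substituting this into the triangle produces, for all $i>0$, an exact triangle
$$B_{i+1}\to B_i\to K_{i-g}\to B_{i+1}.$$
In particular, whenever $K_{i-g}=0$ the outer two terms are isomorphic, giving the telescoping relation $B_i\cong B_{i+1}$ that I will iterate.

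First I would treat the range $g+1\le i\le 2g-1$. For $g+1\le i\le 2g-2$ one has $1\le i-g\le g-2$, and by the hypothesis (\ref{eq: intro}) on $KHI$ each such $K_{i-g}$ vanishes, so $B_i\cong B_{i+1}$ there. Starting from $B_{2g-1}=0$ (Lemma \ref{lem : second top grading}) and telescoping downward through $i=2g-2,\dots,g+1$ yields $B_i=0$ for all $g+1\le i\le 2g-1$. Here $i=2g-1$ is exactly the grading at which $K_{i-g}=K_{g-1}\cong\mathbb{C}$ is nonzero, which is why the telescoping cannot reach it from above; this is precisely the grading that Lemma \ref{lem : second top grading} supplies independently.

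Next I would treat the range $2\le i\le g$. For $2\le i\le g-1$ one has $2-g\le i-g\le -1$, again outside the support of $KHI$ in (\ref{eq: intro}), so $K_{i-g}=0$ and the telescoping relation gives $B_2\cong B_3\cong\cdots\cong B_g$. To evaluate this common value I invoke the single remaining grading $i=g$, where $K_{i-g}=K_0\cong\mathbb{C}$. The exact triangle becomes $B_{g+1}\to B_g\to K_0\to B_{g+1}$, and since $B_{g+1}=0$ by the previous step the sequence collapses to $0\to B_g\to\mathbb{C}\to 0$, whence $B_g\cong\mathbb{C}$ and therefore $B_i\cong\mathbb{C}$ for all $2\le i\le g$, completing the statement.

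The argument is essentially bookkeeping once the engine triangle $B_{i+1}\to B_i\to K_{i-g}\to$ is in place, so I do not anticipate a serious obstacle; the substantive inputs are already packaged in Lemmas \ref{lem : first top grading}--\ref{lem: identifying gradings}. The only points demanding care are that the identification $A_{i+\frac12}\cong B_{i+1}$ requires $i>0$, which holds throughout the ranges used, and that the two special gradings carrying nonzero $KHI$, namely $i=2g-1$ (with $K_{g-1}\cong\mathbb{C}$) and $i=g$ (with $K_0\cong\mathbb{C}$), must be excluded from the telescoping chains and handled separately—the former as the base case from Lemma \ref{lem : second top grading}, and the latter as the anchoring computation that fixes $B_g\cong\mathbb{C}$. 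Breaking the two ranges precisely at these gradings is the one bit of bookkeeping I would verify most carefully, together with the edge case $g=2$, where the lower telescoping chain is empty and $B_2\cong\mathbb{C}$ follows directly from the anchor at $i=g=2$.
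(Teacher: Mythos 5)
Your proof is correct and follows essentially the same route as the paper's: telescoping down from the vanishing at grading $2g-1$ (Lemma \ref{lem : second top grading}) via the bypass triangle (\ref{eq: graded exact triangle, +}) combined with the identification of Lemma \ref{lem: identifying gradings}, then anchoring the value $\mathbb{C}$ at $i=g$ using $KHI(K,0)\cong\mathbb{C}$ and telescoping again. Your explicit packaging of the composite triangle $B_{i+1}\to B_i\to K_{i-g}\to B_{i+1}$ and the check of the $g=2$ edge case are just cleaner bookkeeping of the same argument.
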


 \begin{proof}
To start,  in Lemma \ref{lem : second top grading}, we have proved that 
$$SHI(S^3(K), \gamma_{(1,-2g-1)}, 2g-1)  \cong 0.$$
By Lemma \ref{lem: identifying gradings}, we know that
$$SHI(S^3(K),\gamma_{(1,-2g)},2g-1-\frac{1}{2})\cong SHI(S^3(K), \gamma_{(1,-2g-1)}, 2g-1)=0.$$
Since $KHI(K,g-2)=0$ by the hypothesis of the theorem, taking $i=2g-2$ in (\ref{eq: graded exact triangle, +}), we have
\begin{equation*}
	SHI(S^3(K), \gamma_{(1,-2g-1)}, 2g-2)\cong SHI(S^3(K),\gamma_{1,-2g},2g-1-\frac{1}{2})=0.
\end{equation*}

Repeating the above argument once more for $i=2g-3$, we conclude that 
$$SHI(S^3(K), \gamma_{(1,-2g-1)}, 2g-3)\cong 0.$$
We can keep running this argument until we finish the case $i=g+1$, where we have
\begin{equation} \label{eq: g+1}
	SHI(S^3(K), \gamma_{(0,-2g-1)}, g+1)=0.
\end{equation}

By far we have proved the vanishing part of the lemma. For the other half of the lemma, we use a similar argument. Note we have
$$KHI(K, 0) \cong \mathbb{C}$$
by the hypothesis of the theorem, and
$$SHI(S^3(K), \gamma_{(1,-2g)},g + \frac{1}{2}) \cong 0$$
by Lemma \ref{lem: identifying gradings} and equation (\ref{eq: g+1}). Taking $i=g$ in Formula (\ref{eq: graded exact triangle, +}), we conclude $SHI(S^3(K), \gamma_{(1,-2g-1)}, g) \cong \mathbb{C}$. Then using a similar repetitive argument as above, we conclude
\[SHI(S^3(K), \gamma_{(1,-2g-1)},i)\cong \mathbb{C} \]
for $2 \leq  i \leq g$.
\end{proof}

\begin{lem}\label{lem: middle 4g+1}
For any $i\in\intg$ such that $-g\leq i\leq g$, we have


$$SHI(S^3(K), \gamma_{(2, -4g-1)}, i) \cong \mathbb{C} $$
	
\end{lem}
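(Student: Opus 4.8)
The plan is to run each graded summand through the extended bypass triangle (\ref{eq: graded exact triangle, extended}). For a fixed $i$ with $-g\le i\le g$ it unrolls into the cyclic exact sequence
\[
\cdots\to SHI(S^3(K),\ga_{(1,-2g)},g+i+\tfrac12)\to SHI(S^3(K),\ga_{(2,-4g-1)},i)\to SHI(S^3(K),\ga_{(1,-2g-1)},-g+i)\to\cdots,
\]
in which the middle group sits between its two flanks; here I have recorded the right flank, written $\ga_{(0,-2g-1)}$ in (\ref{eq: graded exact triangle, extended}), through its identification with the corresponding grading of $SHI(S^3(K),\ga_{(1,-2g-1)})$, which is how the earlier lemmas compute it. Since these are finite-dimensional $\mathbb{C}$-spaces, whenever one flank vanishes the middle term is the surviving flank, and in general its dimension is bounded by the sum of the two flanking dimensions. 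By the symmetry in term 3 of Theorem \ref{thm: properties of SHI} for $\ga_{(2,-4g-1)}$ it suffices to treat $0\le i\le g$.

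First I would evaluate the left flank. Applying Lemma \ref{lem: identifying gradings} with $m=g+i+1$ rewrites it as $SHI(S^3(K),\ga_{(1,-2g-1)},g+i+1)$, and Lemmas \ref{lem : first top grading}--\ref{lem : complete gradings} together with term 1 of Theorem \ref{thm: properties of SHI} then show it is $0$ for every $0\le i\le g$ except $i=g-1$, where $g+i+1=2g$ makes it $\mathbb{C}$ by (\ref{eq: top for 2g}). For the right flank, the symmetry of Theorem \ref{thm: properties of SHI} turns $SHI(\ldots,-g+i)$ into $SHI(S^3(K),\ga_{(1,-2g-1)},g-i)$, which by Lemma \ref{lem : complete gradings} and (\ref{eq: middle grading}) equals $\mathbb{C}$ for all $i\in\{0,\dots,g-2\}\cup\{g\}$. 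Hence for each $i$ with $0\le i\le g$ and $i\neq g-1$ exactly one flank is $\mathbb{C}$ and the other is $0$, and the triangle immediately yields $SHI(S^3(K),\ga_{(2,-4g-1)},i)\cong\mathbb{C}$.

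The whole lemma therefore collapses onto the single pair $i=\pm(g-1)$, where the left flank is $\mathbb{C}$ but the right flank is $SHI(S^3(K),\ga_{(1,-2g-1)},\pm1)$ -- precisely the grading that Lemma \ref{lem : complete gradings} deliberately omits. I expect this to be the main obstacle. The downward induction behind Lemma \ref{lem : complete gradings} stalls here because, at $i=1$ in (\ref{eq: graded exact triangle, +}), the term $KHI(K,1-g)\cong KHI(K,-(g-1))\cong\mathbb{C}$ fails to vanish; consequently the exact triangles only constrain $\dim_{\mathbb{C}}SHI(S^3(K),\ga_{(1,-2g-1)},1)$ to lie in $\{0,2\}$, and genuine geometric input is needed to select $0$.

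To supply that input I would first note, from (\ref{eq: graded exact triangle,-}) at $i=1$ together with $KHI(K,g+1)=0$ (term 2 of Theorem \ref{thm: properties of KHI}), that $\psi_{-,1}$ is an isomorphism, so $SHI(S^3(K),\ga_{(1,-2g-1)},1)\cong SHI(S^3(K),\ga_{(1,-2g)},\tfrac12)$; chasing (\ref{eq: graded exact triangle, +}) at $i=0$ then shows this space vanishes exactly when the bypass map $\psi_{+,0}$ is zero. The heart of the proof is thus to propagate the Baldwin--Sivek vanishing of Theorem \ref{thm: vanishing psi}, which supplies $\psi_{+,2g-1}=0$ at the top grading, down to $\psi_{+,0}$, using that $K$ is fibred and not right veering. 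This step cannot be avoided by bookkeeping alone: summing the triangle inequalities over $-g\le i\le g$ and using (\ref{eq: I-sharp and SHI}) together with the a priori instanton L-space inequality only gives $2g+1\le\dim_{\mathbb{C}}I^{\sharp}(S^3_{-2g-1}(K))\le 2g+1+2\dim_{\mathbb{C}}SHI(S^3(K),\ga_{(1,-2g-1)},1)$, so that the conclusion of the lemma, the vanishing of the omitted grading, and the instanton L-space property of $S^3_{-2g-1}(K)$ are all equivalent. Once $\psi_{+,0}=0$ is established the omitted grading vanishes, the two remaining cases $i=\pm(g-1)$ become clean, and every $SHI(S^3(K),\ga_{(2,-4g-1)},i)$ with $-g\le i\le g$ is $\mathbb{C}$, as claimed.
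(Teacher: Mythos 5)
Your setup is the same as the paper's (run the extended triangle (\ref{eq: graded exact triangle, extended}) grading by grading, evaluate the flanks via Lemmas \ref{lem: identifying gradings} and \ref{lem : complete gradings}), and your flank computations for $i\in\{0,\dots,g-2\}\cup\{g\}$ are correct. But there is a genuine gap at $i=\pm(g-1)$: you assert that this case forces one to determine the omitted grading $SHI(S^3(K),\gamma_{(1,-2g-1)},\pm 1)$, that the lemma is \emph{equivalent} to its vanishing, and that the "heart of the proof" is to propagate the Baldwin--Sivek vanishing from $\psi_{+,2g-1}$ down to $\psi_{+,0}$ --- a step you never actually carry out, and for which Theorem \ref{thm: vanishing psi} provides no mechanism. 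As written, your argument does not close.

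The source of the trouble is the reduction "by symmetry it suffices to treat $0\le i\le g$." That symmetry applies to the \emph{statement}, but the exact triangle (\ref{eq: graded exact triangle, extended}) is not symmetric under $i\mapsto -i$: the triangles at $i$ and at $-i$ have different flanks, so by discarding the negative gradings you throw away exactly the information needed. The paper handles $i=g-1$ by instead running the triangle at $i=1-g$, where the left flank is $SHI(S^3(K),\gamma_{(1,-2g)},\tfrac{3}{2})\cong SHI(S^3(K),\gamma_{(1,-2g-1)},2)\cong\mathbb{C}$ (Lemmas \ref{lem: identifying gradings} and \ref{lem : complete gradings}) and the right flank is $SHI(S^3(K),\gamma_{(1,-2g-1)},1-2g)\cong SHI(S^3(K),\gamma_{(1,-2g-1)},2g-1)=0$ (Lemma \ref{lem : second top grading} and the symmetry of term 3 of Theorem \ref{thm: properties of SHI}); exactness then gives $SHI(S^3(K),\gamma_{(2,-4g-1)},1-g)\cong\mathbb{C}$, and term 3 of Theorem \ref{thm: properties of SHI} transports this to $i=g-1$. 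So the case you flag as requiring new geometric input is resolved purely formally, the grading $SHI(S^3(K),\gamma_{(1,-2g-1)},1)$ never needs to be computed, and your claimed three-way equivalence is false. Replace your treatment of $i=\pm(g-1)$ with this symmetry trick and the proof goes through.
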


\begin{proof}


First, taking $i = g$, by term 1 of Theorem \ref{thm: properties of SHI}, we know that 

\[SHI(S^3(K), \gamma_{(1,-2g)}, g+i +\frac{1}{2}) = SHI(S^3(K), \gamma_{(1,-2g)}, 2g +\frac{1}{2}) = 0.\]

Also, by term 4 of Theorem \ref{thm: properties of SHI}, we have
\[SHI(S^3(K), \gamma_{(1,-2g-1)}, -g +i) = SHI(S^3(K), \gamma_{(1,-2g-1)}, 0) \cong  \mathbb{C}.\]

Then by Formula (\ref{eq: graded exact triangle, extended}) in Theorem \ref{thm: graded bypass}, we have
\[ SHI(S^3(K), \gamma_{(2, -4g-1)}, g) \cong  SHI(S^3(K), \gamma_{(2, -4g-1)}, -g)\cong \mathbb{C}.\]


Next, we skip the case $i=g-1$ and consider $i$ so that $ g-2 \geq i \geq -g+1.$ The case $i=g-1$ will be dealt with later. 

By Lemma \ref{lem : complete gradings}, when $i\in[0,g-2]$, we have
\[ SHI(S^3(K), \gamma_{(1,-2g-1)}, g+i +1) = 0.\]

 Also, by Lemma \ref{lem: identifying gradings}, we have
 \[SHI(S^3(K), \gamma_{(1,-2g)}, g+i +\frac{1}{2}) \cong SHI(S^3(K), \gamma_{(1,-2g-1)}, g+i +1).\]

Thus, we derive
$$SHI(S^3(K), \gamma_{(1,-2g)}, g+i +\frac{1}{2}) = 0$$
for all $i\in[0,g-2]$.
 
On the other hand, by term 3 of Theorem \ref{thm: properties of SHI}, we have an isomorphism 
 \[SHI(S^3(K), \gamma_{(1,-2g-1)}, i-g) \cong SHI(S^3(K), \gamma_{(1,-2g-1)}, g-i),\]
 and by Lemma \ref{lem : complete gradings}, for any $i\in[0,g-2]$, we have
 \[ SHI(S^3(K), \gamma_{(1,-2g-1)}, g-i) \cong \mathbb{C}.\]
Thus, by Formula (\ref{eq: graded exact triangle, extended}), we have 
 
  \[SHI(S^3(K), \gamma_{(2, -4g-1)}, i) \cong \mathbb{C}.\]

  
  
Similarly, for $-g+1\leq i \leq -1$,  by Lemma \ref{lem : complete gradings}, we have
  \[ SHI(S^3(K), \gamma_{(1,-2g-1)}, g-i) \cong 0,\]
and 
  \[ SHI(S^3(K), \gamma_{(1,-2g)}, g+i + \frac{1}{2}) \cong  SHI(S^3(K), \gamma_{(1,-2g-1)}, g+i + 1)\cong \mathbb{C}.\]
Then, by Formula (\ref{eq: graded exact triangle, extended}), we conclude 
\[SHI(S^3(K), \gamma_{(2, -4g-1)}, i) \cong \mathbb{C}\]
 for $g-2 \geq i \geq -g+1$.
 
 Notice by far we have covered all $-g \leq i \leq g$ except $i = g-1$, which could be reached by term 3 in Theorem \ref{thm: properties of SHI} and the conclusion above since
\[ SHI(S^3(K), \gamma_{(2, -4g-1)}, g-1) \cong SHI(S^3(K), \gamma_{(2, -4g-1)}, 1-g) \cong \mathbb{C} \]
The lemma then follows.
\end{proof}

Theorem \ref{thm : main theorem} then follows directly from Lemma \ref{lem: middle 4g+1} and term 5 of Theorem \ref{thm: properties of SHI}. 
\end{proof}

\bibliographystyle{alpha}
\bibliography{index}

\begin{thebibliography}{LPCS20}

\bibitem[BLY20]{BLY2020}
John~A. Baldwin, Zhenkun Li, and Fan Ye.
\newblock {Sutured instanton homology and {H}eegaard diagrams}.
\newblock {\em ArXiv: 2011.09424, v1}, 2020.

\bibitem[BS18]{baldwin2018khovanov}
John Baldwin and Steven Sivek.
\newblock Khovanov homology detects the trefoils.
\newblock {\em ArXiv:1801.07634, v1}, 2018.

\bibitem[BS20]{baldwin2020concordance}
John Baldwin and Steven Sivek.
\newblock Framed instanton homology and concordance.
\newblock {\em ArXiv:2004.08699, v2}, 2020.

\bibitem[DS19]{daemi2019equivariant}
Aliakbar Daemi and Christopher Scaduto.
\newblock Equivariant aspects of singular instanton {F}loer homology.
\newblock {\em ArXiv:1912.08982}, 2019.

\bibitem[Flo90]{floer1990knot}
Andreas Floer.
\newblock Instanton homology, surgery, and knots.
\newblock In {\em Geometry of low-dimensional manifolds, 1 ({D}urham, 1989)},
  volume 150 of {\em London Math. Soc. Lecture Note Ser.}, pages 97--114.
  Cambridge Univ. Press, Cambridge, 1990.

\bibitem[GL19]{li2019decomposition}
Sudipta Ghosh and Zhenkun Li.
\newblock Decomposing sutured monopole and instanton {F}loer homologies.
\newblock {\em ArXiv:1910.10842, v2}, 2019.

\bibitem[HHK14]{Hedden2014}
Matthew Hedden, Christopher Herald, and Paul Kirk.
\newblock {The pillowcase and perturbations of traceless representations of
  knot groups}.
\newblock {\em Geom. Topol.}, 18(1):211--287, 2014.

\bibitem[Hom20]{hom2020notes}
Jennifer Hom.
\newblock Lectures notes on heegaard floer homology.
\newblock {\em arXiv:2008.01836}, 2020.

\bibitem[Juh06]{juhasz2006holomorphic}
Andr\'{a}s Juh\'{a}sz.
\newblock Holomorphic discs and sutured manifolds.
\newblock {\em Algebr. Geom. Topol.}, 6:1429--1457, 2006.

\bibitem[KM10a]{kronheimer2010instanton}
Peter Kronheimer and Tomasz Mrowka.
\newblock Instanton {F}loer homology and the {A}lexander polynomial.
\newblock {\em Algebr. Geom. Topol.}, 10(3):1715--1738, 2010.

\bibitem[KM10b]{kronheimer2010knots}
Peter Kronheimer and Tomasz Mrowka.
\newblock Knots, sutures, and excision.
\newblock {\em J. Differential Geom.}, 84(2):301--364, 2010.

\bibitem[KM11a]{kronheimer2011khovanov}
Peter Kronheimer and Tomasz Mrowka.
\newblock Khovanov homology is an unknot-detector.
\newblock {\em Publ. Math. Inst. Hautes \'{E}tudes Sci.}, 113:97--208, 2011.

\bibitem[KM11b]{kronheimer2011knot}
Peter Kronheimer and Tomasz Mrowka.
\newblock Knot homology groups from instantons.
\newblock {\em J. Topol.}, 4(4):835--918, 2011.

\bibitem[Li19a]{li2019direct}
Zhenkun Li.
\newblock Knot homologies in monopole and instanton theories via sutures.
\newblock {\em ArXiv:1901.06679, v6}, 2019.

\bibitem[Li19b]{li2019tau}
Zhenkun Li.
\newblock Tau invariants in monopole and instanton theories.
\newblock {\em ArXiv:1910.01758, v2}, 2019.

\bibitem[LPCS20]{lidman2020framed}
Tye Lidman, Juanita Pinz{\'o}n-Caicedo, and Christopher Scaduto.
\newblock Framed instanton homology of surgeries on {L}-space knots.
\newblock {\em ArXiv:2003.03329, v1}, 2020.

\bibitem[LY20]{li2020heegaard}
Zhenkun Li and Fan Ye.
\newblock Instanton floer homology, sutures, and heegaard diagrams.
\newblock {\em ArXiv:2010.07836}, 2020.

\bibitem[LZ20]{Lobb2020}
Andrew Lobb and Raphael Zentner.
\newblock {On spectral sequences from {K}hovanov homology}.
\newblock {\em Algebr. Geom. Topol.}, 20(2):531--564, 2020.

\bibitem[OS04]{ozsvath2004holomorphicknot}
Peter Ozsv\'{a}th and Zolt\'{a}n Szab\'{o}.
\newblock Holomorphic disks and knot invariants.
\newblock {\em Adv. Math.}, 186(1):58--116, 2004.

\bibitem[Rol90]{rolfsen2003knot}
Dale Rolfsen.
\newblock {\em Knots and links}, volume~7 of {\em Mathematics Lecture Series}.
\newblock Publish or Perish, Inc., Houston, TX, 1990.
\newblock Corrected reprint of the 1976 original.

\end{thebibliography}

\end{document}